\theoremstyle{plain}
\newtheorem{thm}{Theorem}[section]
\newtheorem{lemma}[thm]{Lemma}
\newtheorem{claim}[thm]{Claim}
\theoremstyle{definition}
\newtheorem{remark}[thm]{Remark}
\newcommand{\R}{\mathbb{R}}
\renewcommand{\P}{\mathbb{P}}
\DeclareMathOperator{\E}{\mathbb{E}}
\newcommand{\RR}{\mathbb{R}}
\newcommand{\Z}{\mathbb{Z}}
\newcommand{\pot}{\Psi}
\newcommand{\eq}{equation}
\newcommand{\be}{\begin{\eq}}
\newcommand{\ee}{\end{\eq}}
\renewcommand{\and}{\mathrm{\ and\ }}
\renewcommand{\o}{o}
\newcommand{\G}{{\sf G}}
\newcommand{\g}{{\sf g}}
\begin{document}

\title{Every recurrent network has a potential tending to infinity}
\author{Asaf Nachmias}
\address{Department of Mathematical Sciences, Tel Aviv University, Israel.}
\email{asafnach@tauex.tau.ac.il}
\author{Yuval Peres}
\address{Beijing Institute of Mathematical Sciences and Applications, Beijing, China.}
\email{yperes@gmail.com}

\begin{abstract} A rooted network consists of a connected, locally finite 
  graph $G$, equipped with edge conductances and a distinguished vertex $\o$.   A nonnegative function on the vertices of $G$ which vanishes at $\o$, has Laplacian $1$ at $\o$, and is harmonic at all other vertices is called a potential. We prove that every infinite recurrent rooted network admits a potential tending to infinity. This is an analogue of classical theorems due to Evans and Nakai in the settings of Euclidean domains and Riemannian surfaces.

\end{abstract}
\begingroup
\def\uppercasenonmath#1{} 
\let\MakeUppercase\relax 
\maketitle
\endgroup


\section{Introduction}\label{sec:intro}

A {\em rooted network} $(G,c,\o) $ consists of a     connected, locally finite graph $G=(V,E)$, endowed with positive edge weights  $c:E\to (0,\infty)$  and a distinguished vertex  $\o \in V$. 
The corresponding {\em network random walk} with transition probabilities $p(x,y)=c_{xy}/c_x$ is reversible, with reversing measure $c_x=\sum_{z\sim x}c_{xz}$. Define the network {\bf Laplacian} of a function $f:V\to\RR$ by 
$$ \Delta f (v) = \sum_{x \sim v} c_{vx} \big(f(x) - f(v)\big) \, .$$

 A {\bf potential} on  $(G,c,\o )$ is a function $h:V\to [0,\infty)$ that is harmonic off $\o$ (i.e.,  $\Delta h(v) = 0$ for every vertex $v\neq \o$), 
normalized so that $h(\o)=0$ and $\Delta h(\o) =1$.
In every infinite rooted network there is   at least one potential, see \cref{claim:compact}. Our  focus is on recurrent networks.



We say that a function $h:V\to [0,\infty)$ {\bf tends to infinity} and write  $h\to \infty$, if for all $M>0$ the set $\{ v \in V : h(v) \leq M \}$ is finite. Observe that on $\Z$, the potentials $h_1(n)=\max(n,0)$ for $n\in \Z$ and $h_2(n)=\max(-n, 0)$ do \emph{not} tend to infinity, but any non-trivial convex combination of them does. On $\Z^2$, on the other hand, it is  well known  that there is a unique potential $h$ with $h(v) = \Theta(\log |v|)$ as $v \to \infty$,   see \cite[Sections 12 and 31]{Spitzer}. Recently there has been renewed interest in recurrent potential theory, motivated by close connections to unimodular random networks and uniform spanning trees in work of  Berestycki, van Engelenburg and Hutchcroft, see  \cite{BereEngel,EngelHutch}.

\begin{thm}\label{conj:main} On any infinite recurrent rooted network there exists a potential  which tends to infinity.
\end{thm}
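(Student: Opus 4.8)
The plan is to realize a potential as the \emph{voltage of a unit current flow into} $\o$ (equivalently, from infinity to $\o$), and to engineer the flow so that a non-negligible amount of current enters from \emph{every} end of the network; recurrence will then force the voltage to diverge in all directions simultaneously. By \cref{claim:compact} a potential already exists, and any two potentials differ by a function that is harmonic on all of $V$ and vanishes at $\o$. On a recurrent network every nonconstant harmonic function is unbounded below, so one cannot repair a ``bad'' direction of a given potential simply by adding a harmonic function without violating nonnegativity; this is the first indication that the result is delicate. It also rules out the most obvious candidate: the potential kernel $a(x)=\lim_n\big(G_n(\o,\o)-G_n(x,\o)\big)$, where $G_n$ is the Green function of the walk killed on exiting $V_n$, need \emph{not} tend to infinity. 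Indeed, attach a pendant ray $R=(r_1,r_2,\dots)$ to a vertex of $\Z^2$: from $r_1$ one has $\P_{r_1}(\tau_n<T_\o)\asymp 1/n$ while $G_n(\o,\o)\asymp\log n$, so $a(r_1)=\lim_n G_n(\o,\o)\,\P_{r_1}(\tau_n<T_\o)=0$, and hence $a\equiv 0$ on all of $R$. The construction must therefore \emph{actively} push current into such thin ends.

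Two consequences of recurrence are the engine of the proof. First, every infinite simple path $(e_1,e_2,\dots)$ satisfies $\sum_k c_{e_k}^{-1}=\infty$: otherwise that path alone supports a finite-energy unit flow to infinity, contradicting recurrence. Thus, if a flow keeps the current along a ray bounded below, the induced voltage diverges along that ray. Second, fixing an exhaustion $\o\in V_1\subset V_2\subset\cdots$ with $\bigcup_n V_n=V$, the harmonic functions $v_n(x)=\P_x(\tau_n<T_\o)$ — equal to $1$ on $\partial V_n$ and $0$ at $\o$ — decrease pointwise to $0$. This is what lets us manufacture finite potentials that are large on a distant shell yet uniformly small deep inside, the raw material for an Evans-type summation.

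The construction itself proceeds by an exhaustion. On each finite network $V_n$ solve the mixed problem: extract unit current at $\o$ and \emph{inject} it on $\partial V_n$ according to a distribution $\nu_n$ that allots every end a share bounded below (so that current must traverse each end to reach $\o$). The resulting voltage $h^{(n)}$ is nonnegative, vanishes at $\o$, is harmonic off $\o$ in $V_n$, and satisfies $\Delta h^{(n)}(\o)=1$. One then assembles a globally defined limit by telescoping these along the shells $V_{n_{k}}\setminus V_{n_{k-1}}$ — adding, at stage $k$, a shell-supported harmonic increment (built from the $v_n$) that raises the height by a definite amount across the $k$-th shell in every end, while contributing a summably small amount to any fixed vertex inside $V_{n_{k-1}}$. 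A gradient-Harnack / normal-families argument along the exhaustion guarantees that the limit $h$ is harmonic off $\o$ in the \emph{full} network, hence a genuine potential; nonnegativity and the normalization $\Delta h(\o)=1$ are preserved because each building block has these properties and $\o$ remains the unique sink. Finally, the persistent per-shell height gain combined with $\sum_k c_{e_k}^{-1}=\infty$ along each ray yields $h\to\infty$.

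The main obstacle is the simultaneous balancing. One must inject enough current into each of the (possibly infinitely many) ends that the voltage diverges there, keep the total current equal to $1$, keep every vertex value finite, and maintain both nonnegativity and the single normalized source at $\o$ through the limit. Recurrence makes any fixed positive share into a given ray suffice to force divergence of the voltage along it; but with infinitely many ends the shares must taper to $0$, and the tapering must be slow enough — measured against the series resistances along each ray — that height is still gained in every direction, yet fast enough that the summed contribution at each fixed vertex converges. Organizing the ends along the exhaustion and choosing this Evans-type tapering so that all of these constraints hold at once is where the real work lies, and recurrence enters precisely as the mechanism converting ``bounded-below current into a direction'' into ``voltage tending to infinity,'' which is what makes the balance achievable.
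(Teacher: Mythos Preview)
Your proposal correctly identifies the Evans--Nakai strategy and several useful ingredients (the pendant-ray counterexample showing the naive potential kernel can vanish along a ray, the divergence of $\sum_k c_{e_k}^{-1}$ along every ray, the pointwise decay of $v_n$), but it is not a proof: the step you yourself label ``where the real work lies'' --- actually choosing the injection distributions $\nu_n$, or equivalently the shell-supported increments, so that every direction receives enough current while the sum at each fixed vertex converges and nonnegativity is preserved --- is never carried out. Phrasing the allotment in terms of graph ends does not resolve this, because ends need not be well separated by the exhaustion, there may be infinitely many of them, and there is no canonical measure on them that interacts well with the edge resistances; this is exactly why Nakai's original argument retreats to the Stone--\v Cech boundary, the non-constructive device the present paper sets out to avoid. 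Your final paragraph is an honest description of a hard problem, not a solution to it.

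The paper circumvents the balancing problem entirely. It first reduces the theorem to the key lemma $M(r):=\max_{\psi\in\Psi}\min_{v\in\partial B(\o,r)}\psi(v)\to\infty$; once this is known, the easy convex combination $h=\sum_n 2^{-n}\psi_{r_n}$ --- essentially the Evans-type sum you envision --- finishes the job immediately. The substance is that $M(r)$ is a minimax quantity: by Sion's theorem one swaps to $\min_{\eta\in\Upsilon_r}\max_{\psi\in\Psi}\sum_v\eta(v)\psi(v)$, passes to a subsequential pointwise limit $h$ of the dipole mixtures $\g_\o(\cdot,\eta_{r_n})$ along the minimizing measures $\eta_{r_n}$, and then uses the Doob $h$-process together with a path-reversal estimate (conditioning on $\tau_v<\tau_\o^+$ and comparing to the $h$-process) to force $\sum_v\eta_{r_n}(v)h(v)\to\infty$. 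No explicit injection measure is ever engineered; the minimax inversion is precisely what replaces your missing balancing step.
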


The earliest theorem of this type is due to G.C.~Evans \cite{Evans36} who showed that every compact polar set $K$ in $\R^d$ supports a probability measure such that its Newtonian potential is infinite on $K$. Using Evans' approach, Nakai \cite{Nakai62} proved that every parabolic noncompact Riemannian surface admits a function which is harmonic off a compact set and tends to infinity (i.e., its level sets are compact). In Nakai's proof, the set $K$ is replaced by the Stone-\v Cech boundary of the surface. Nakai's argument can be adapted to prove \cref{conj:main}, but the use of the Stone-\v Cech compactification makes this argument non-constructive.  Here we present a more constructive probabilistic approach, using the minimax theorem. 

\begin{remark} \cref{conj:main} implies that if a rooted network admits a unique potential, then it must tend to infinity. This answers a question raised following \cite[Prop. 5.1]{BereEngel}.
\end{remark}

\begin{remark} It is well known that the existence of a potential tending to infinity implies recurrence. Indeed, let $h$ be a potential, let $\{X_t\}$ be the network random walk and consider the hitting time $\tau_\o=\min\{ n \geq 0 : X_n = \o\}$. Then $h(X_{n\wedge \tau_\o})$ is a nonnegative martingale, whence it almost surely converges; this     contradicts transience if $h\to \infty$.
\end{remark}

\section{Preliminaries}
\subsection{Dipoles and potentials} The {\bf Green kernel} $\G_\o(x,y)$ of the random walk $\{X_n\}_{n \geq 0}$ killed at $\o$ is defined by
$$ \G_\o(x,y) := \sum_{n=0}^\infty \P_x( X_n = y, \,\, \tau_\o > n) \, .$$
The corresponding {\bf Green density},  $${\sf g}_\o (x,y) = \frac{\G_\o(x,y)}{c_y} \, ,$$
is symmetric by reversibility. A {\bf dipole} from $\o$ to another vertex $y$ in a network $(G,c)$ is a function $f:V\to [0,\infty)$, for which $f(\o)=0$ and 
$$\forall v \in V \qquad \Delta f (v) = {\bf 1}_\o(v)  - {\bf 1}_y(v) \, .$$

\begin{claim}\label{claim:GreenIsDipole} In a recurrent network, the Green density $g_\o(\cdot,y)$ for the killed random walk is a dipole from $\o$ to $y$. Moreover,  $\sup_y g_\o(x,y)<\infty$ for each $x \in V$.   
\end{claim}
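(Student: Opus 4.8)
The plan is to establish the two assertions separately, first the dipole property and then the boundedness.

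<br>

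To show that $\g_\o(\cdot, y)$ is a dipole from $\o$ to $y$, I would verify the three defining conditions. First, $\g_\o(\o, y) = \G_\o(\o, y)/c_y = 0$ since $\tau_\o = 0$ on $\{X_0 = \o\}$, so the walk killed at $\o$ never reaches $\o$ with positive weight at step $0$ onward in a way that contributes—more precisely $\G_\o(\o,y) = \sum_n \P_\o(X_n = y, \tau_\o > n) = 0$ because $\tau_\o > n$ forces $n < 0$. Second, I would compute the Laplacian. The natural approach is to use the last-exit or first-step decomposition of the Green function: conditioning on the first step of the walk from $x$ gives the relation $\G_\o(x,y) = \1_x(y) + \sum_{z \sim x} p(x,z)\G_\o(z,y)$ for $x \neq \o$, which after multiplying through by $c_x$ and using $p(x,z) = c_{xz}/c_x$ translates into a statement about $\Delta_x \g_\o(x,y)$. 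Carrying out the symmetry via the Green density, I expect to land on $\Delta f(v) = \1_\o(v) - \1_y(v)$ with $f = \g_\o(\cdot, y)$, where the delta at $y$ comes from the diagonal term $\1_x(y)$ and the delta at $\o$ arises from the boundary condition at the killed vertex.

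<br>

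For the boundedness claim $\sup_y \g_\o(x,y) < \infty$, I would use recurrence and the symmetry of the Green density. Since $\g_\o(x,y) = \g_\o(y,x)$, fixing $x$ and varying $y$ is the same as studying $\g_\o(y,x)$ as a function of $y$, i.e. the dipole from $\o$ to $x$ evaluated at $y$. The key observation is that in a recurrent network, from any starting vertex the walk hits $\o$ almost surely, and $\G_\o(x,x) = \E_x[\#\text{visits to } x \text{ before } \tau_\o]$ equals $1/\P_x(\tau_\o < \tau_x^+)$, the reciprocal of the escape probability to $\o$ before returning to $x$; this is finite and controls the diagonal. The general inequality I would invoke is that $\G_\o(x,y) \leq \G_\o(x,x)$, which follows because any excursion contributing to $\G_\o(x,y)$ must pass through $x$ is not quite right—instead the cleaner bound is $\G_\o(x,y) = \P_x(\tau_y < \tau_\o)\,\G_\o(y,y)$ by the strong Markov property at the first hitting of $y$. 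Combining with symmetry, $\g_\o(x,y) = \G_\o(x,y)/c_y \leq \G_\o(y,y)/c_y = \g_\o(y,y)$, so the task reduces to bounding the diagonal Green density $\g_\o(y,y)$ uniformly in $y$.

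<br>

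The main obstacle is precisely this last uniform bound on the diagonal. A priori $\g_\o(y,y) = \G_\o(y,y)/c_y$ need not be uniformly bounded over all $y$, so I would need a different reduction. The more robust route is to bound $\g_\o(x,y)$ by quantities anchored at the fixed vertex $x$ rather than the varying $y$: using symmetry and the strong Markov property at the first visit to $x$, I would write $\G_\o(y,x) = \P_y(\tau_x < \tau_\o)\,\G_\o(x,x) \leq \G_\o(x,x)$, which gives $\g_\o(x,y) = \G_\o(y,x)/c_x \le \G_\o(x,x)/c_x = \g_\o(x,x)$, a finite bound depending only on $x$. Here recurrence guarantees $\G_\o(x,x) = 1/\P_x(\tau_\o < \tau_x^+) < \infty$ because the escape probability to $\o$ is strictly positive (the network is connected, so there is a path of positive conductance from $x$ to $\o$). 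I expect the delicate point to be justifying these strong Markov decompositions carefully in the killed chain and confirming that recurrence is exactly what rules out $\G_\o(x,x) = \infty$; transience would allow infinitely many returns to $x$ without the escape-to-$\o$ event ever occurring.
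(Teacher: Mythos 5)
There is a genuine gap in your verification of the dipole property, at the root itself. The first-step decomposition $\G_\o(x,y)=\1_x(y)+\sum_{z\sim x}p(x,z)\G_\o(z,y)$ is valid only for $x\neq\o$, and after rescaling it yields $\Delta \g_\o(\cdot,y)(v)=-\1_y(v)$ for all $v\neq\o$; it says nothing about the Laplacian at $\o$. Your phrase ``the delta at $\o$ arises from the boundary condition at the killed vertex'' is not an argument: since $\g_\o(\o,y)=0$, one must show directly that $\sum_{x\sim\o}c_{\o x}\,\g_\o(x,y)=1$. The paper does this by writing $c_{\o x}\,\g_\o(x,y)=\frac{c_{\o x}}{c_x}\G_\o(y,x)$ (symmetry of the Green density) and performing a last-step decomposition, so that the sum telescopes to $\sum_{n\ge0}\P_y(\tau_\o=n+1)=\P_y(\tau_\o<\infty)$, which equals $1$ \emph{precisely because the network is recurrent}. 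This is the only place recurrence enters the dipole property, and your write-up never invokes recurrence in that part; in a transient network the same function has Laplacian $\P_y(\tau_\o<\infty)$ at $\o$, which can be strictly less than $1$, so the statement would fail. The remaining pieces of your dipole verification ($\g_\o(\o,y)=0$ and harmonicity with the correct $-1$ at $y$) are fine.

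Your boundedness argument, after the mid-paragraph self-correction, is correct and takes a genuinely different route from the paper. You anchor at the fixed vertex $x$: by symmetry and the strong Markov property at the first hitting time of $x$, $\g_\o(x,y)=\G_\o(y,x)/c_x=\P_y(\tau_x<\tau_\o)\,\G_\o(x,x)/c_x\le \g_\o(x,x)$, and $\G_\o(x,x)=1/\P_x(\tau_\o<\tau_x^+)<\infty$ because connectivity forces a positive escape probability to $\o$. The paper instead reads off $\g_\o(x,y)\le c_{\o x}^{-1}$ for $x\sim\o$ from the identity $\sum_{x\sim\o}c_{\o x}\,\g_\o(x,y)=1$ and propagates a $y$-independent bound along a path from $\o$ to $x$ using superharmonicity. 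Your bound $\g_\o(x,x)$ is sharper and more probabilistic, while the paper's has the feature of being a by-product of the identity at $\o$ that it must establish anyway; both correctly produce a bound depending only on $x$. If you repair the Laplacian computation at $\o$ as above, the whole proof goes through.
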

\begin{proof}
First observe that $g_\o(\cdot,y)$ is harmonic off $\{\o,y\}$ and satisfies $\g_\o(\o, y)=0$. Second, $\Delta \g_\o(\cdot,y) (y) =-1$, since
\begin{equation} \label{eq:laplace-y}
     \sum_{x \sim y} c_{xy} (\g_\o(y,y)-\g_\o(x,y)) = \G_\o(y,y) - \sum_{x \sim y} {c_{xy} \over c_y} \G_\o(x,y) = 1 \, .
    \end{equation}
Also,
\be \label{eq:dipole11}  \Delta \g_\o(\cdot,y)(\o) = \sum_{x\sim \o}   c_{\o x} g_\o(x,y)= \sum_{x\sim \o} \G_o(y,x) \frac{c_{\o x}}{c_x} =1 \,,
\ee
where the last equality is obtained by summing, over  $n \ge 0$, the identity
$$
\sum_{x\sim \o}  \P_y( X_{n} = x  , \,\, \tau_\o > n)\frac{c_{\o x}}{c_x} =  \P_y( X_{n+1} = \o, \,\, \tau_\o > n)  = \P_y(   \tau_\o =n+1)  \, 
$$
and using recurrence. Thus, $g_\o(\cdot,y)$ is a dipole. 


Lastly, the identity \eqref{eq:dipole11}  implies that $\g_\o(x,y) \le c_{ox}^{-1}$ for every $x \sim \o$. More generally, since $\g_\o(\cdot,y)$ is superharmonic off $\o$, for any $x \ne \o$ we have $\g_\o(x,y) \le \prod_{i=0}^\ell c_{x_{i-1}x_i}^{-1}$, where $x_0,\dots,x_\ell$ is a simple path from $\o$ to $x$. 
\end{proof}

\begin{claim}\label{claim:compact} The space of potentials on a recurrent infinite rooted $(G,c,\o)$ is a nonempty, compact, convex subset of $\R^V$ endowed with the product topology. 
\end{claim}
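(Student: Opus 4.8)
The plan is to establish three separate properties: nonemptiness, convexity, and compactness.

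Convexity is immediate and I would dispatch it first. If $h_1$ and $h_2$ are potentials and $t\in[0,1]$, then $th_1+(1-t)h_2$ is nonnegative, vanishes at $\o$, and—since the Laplacian $\Delta$ is linear—is harmonic off $\o$ with Laplacian $t\cdot 1+(1-t)\cdot 1=1$ at $\o$. So the convex combination is again a potential.

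For nonemptiness, the natural candidate is the Green density itself. By \cref{claim:GreenIsDipole}, in a recurrent network $g_\o(\cdot,y)$ is a dipole from $\o$ to $y$, meaning $\Delta g_\o(\cdot,y)=\mathbf{1}_\o-\mathbf{1}_y$. This is not yet a potential because it has a sink at $y$, but I expect to produce a potential by pushing the sink to infinity. Concretely, fix an exhaustion of $G$ by finite sets or a sequence $y_n\to\infty$ and consider $g_\o(\cdot,y_n)$; normalizing if necessary, one extracts a subsequential limit. The key point making this work is precisely the uniform bound $\sup_y g_\o(x,y)<\infty$ for each fixed $x$ from \cref{claim:GreenIsDipole}: it guarantees that the sequence of functions is pointwise bounded, so by a diagonal (Tychonoff / Arzelà–Ascoli-type) argument along the countable vertex set $V$ there is a subsequence converging pointwise to some $h\ge 0$ with $h(\o)=0$. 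Since $\Delta$ depends on only finitely many values at each vertex (local finiteness), the limit satisfies $\Delta h(v)=0$ for all $v\neq\o$ and $\Delta h(\o)=1$, because the sink terms $-\mathbf{1}_{y_n}$ escape to infinity; hence $h$ is a potential. In fact, this same argument also delivers compactness, so nonemptiness and compactness can be proved together.

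For compactness, I would show the space of potentials is a closed subset of a compact set in the product topology. The crucial structural input is again the pointwise bound: I claim every potential $h$ satisfies $h(x)\le B(x)$ for an explicit finite bound $B(x)$ depending only on $x$ and the network. One way to see this is the maximum-principle / superharmonicity reasoning already used in \cref{claim:GreenIsDipole}: a potential is harmonic off $\o$ and, being the integral against a harmonic measure, its value at $x$ is controlled along a simple path from $\o$ to $x$ by a product of inverse conductances, giving $h(x)\le\prod_{i=1}^\ell c_{x_{i-1}x_i}^{-1}$. Thus the set of potentials sits inside the product $\prod_{x\in V}[0,B(x)]$, which is compact by Tychonoff. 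It remains to verify the set is closed: the defining conditions $h(\o)=0$, $\Delta h(v)=0$ for $v\neq\o$, and $\Delta h(\o)=1$ are each continuous in the product topology because, by local finiteness, each evaluation of $\Delta$ involves only finitely many coordinates. A closed subset of a compact set is compact, completing the proof.

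The main obstacle I anticipate is justifying the uniform upper bound $h(x)\le B(x)$ on all potentials in a way that is genuinely a priori rather than assuming more than harmonicity off $\o$ and nonnegativity. The bound via inverse-conductance products along a path works because superharmonicity off $\o$ forces $h(x)$ to be at most the value at a neighbor closer to $\o$ amplified by the conductance ratio; the normalization $\Delta h(\o)=1$ then caps the neighbors of $\o$. Making this monotone-along-paths argument rigorous, and checking that the same bound that governs the Green densities governs arbitrary potentials, is the step requiring the most care.
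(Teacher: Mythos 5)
Your proposal is correct and follows essentially the same route as the paper: nonemptiness via subsequential pointwise limits of the dipoles $\g_\o(\cdot,y_n)$ with $y_n\to\infty$ (the sinks escaping to infinity), compactness via the a priori inverse-conductance bound along a path from $\o$ together with closedness of the defining conditions in the product topology, and convexity by linearity of $\Delta$. The ``step requiring the most care'' you flag is exactly the one the paper invokes (the superharmonicity-along-paths bound from the proof of \cref{claim:GreenIsDipole}), and your sketch of it is sound.
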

\begin{proof} 
By \cref{claim:GreenIsDipole}, for any sequence of vertices $y_n\to \infty$, Cantor diagonalization yields a subsequential pointwise limit of $\g_\o(\cdot, y_n )$, and any such limit is a potential.  Next, for any potential $h$ we have $h(x)\leq \prod_{i=0}^\ell c_{x_{i-1}x_i}^{-1}$, where $x_0,\dots,x_\ell$ is a simple path from $\o$ to $x$, by the argument in the proof of \cref{claim:GreenIsDipole}, so the set of potentials is a closed subset of a compact product space.
Finally, convexity is obvious. 
\end{proof}

\subsection{The Doob transform determined by a potential}
Any potential $h$ defines a Markov chain on the state space $S_h = \{v : h(v)>0\}$ via the Doob $h$-transform that has transition probabilities
$$ p^h(x,y) = {p(x,y) h(y) \over h(x)} \, .$$
The corresponding Markov chain $\{Y_n\}_{n \geq 0}$ is called the {\bf $h$-process}. Write $\P^h_\mu$ for the law of the $h$-process with initial distribution $\mu$ and $\E^h_\mu$ for the corresponding expectation.


For any finite sequence $\gamma=(\gamma_1,\ldots,\gamma_k)$ in $V$, we write 
$\P_{\gamma_1}(\gamma):=\prod_{i=2}^k p(\gamma_{i-1},\gamma_i) \,.$
If $\gamma$ is contained in $S_h$, we define 
$\P^h_{\gamma_1}(\gamma)$  similarly using $p^h(\gamma_{i-1},\gamma_i)$. We have the telescoping product
\be \label{eq:hprocessvsSRWSimple} \P^h_{\gamma_1}(\gamma) = \P_{\gamma_1}(\gamma) {h(\gamma_k) \over h(\gamma_1)} \, .\ee
We will use the initial distribution $\mu_h$ defined by
\be\label{def:mu}\mu_h(v) = c_{\o v} h(v) \quad \text{for all} \; \; v \sim \o  \, .\ee 
Observe that
\be \label{eq:hprocessvsSRW} \P^h_{\mu_h} (\gamma) = c_{\o \gamma_1} h(\gamma_1) \P^h_{\gamma_1}(\gamma) =  c_{\o \gamma_1} \P_{\gamma_1}(\gamma) h(\gamma_k) = c_\o \P_\o(\o\gamma) h(\gamma_k)  \, .\ee

There is a standard relation between the Green kernels of the $h$-process and the network random walk killed at $\o$.
\begin{claim} \label{claim:GhMu} Let $h$ be a potential on a recurrent rooted network $(G,c,\o)$. Then the $h$-process   $\{Y_n\}_{n \geq 0}$ is transient. Moreover,  its Green kernel $\G^h(x,y)=\sum_{k \geq 0} \P^h_x( Y_k = y) $, defined for $x,y\in S_h$, satisfies
\be\label{eq:XvsY} \G^h(x,y) = \G_\o(x,y) \frac{h(y)}{h(x)} \, ,\ee
and for all $v\in S_h$
\be\label{eq:Gh} \G^h({\mu_h},v) = h(v) c_v \, ,\ee
where $\G^h({\mu},v):=\sum_x \mu(x) \G^h(x,v)$.
\end{claim}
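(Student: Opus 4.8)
The plan is to treat all three assertions as consequences of the Doob-transform path representation, so that the engine throughout is the telescoping identity \eqref{eq:hprocessvsSRWSimple}. First I would record that the $h$-process is genuinely a Markov chain on $S_h$: for $x\in S_h$ (so $x\neq\o$), harmonicity of $h$ at $x$ gives $\sum_{y\sim x}c_{xy}h(y)=h(x)c_x$, whence $\sum_y p^h(x,y)=\tfrac1{h(x)}\sum_y p(x,y)h(y)=1$; moreover any transition into $\{h=0\}$ carries probability $p(x,y)h(y)/h(x)=0$, so the chain never leaves $S_h$. This also shows $\mu_h$ is a probability measure, since $\sum_{x\sim\o}c_{\o x}h(x)=\Delta h(\o)=1$.

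Next I would prove \eqref{eq:XvsY}. Writing $\G^h(x,y)$ as a sum over all $S_h$-paths $\gamma$ from $x$ to $y$ and applying \eqref{eq:hprocessvsSRWSimple} termwise, $\P^h_x(\gamma)=\P_x(\gamma)\,h(y)/h(x)$, so by nonnegativity (monotone convergence) $\G^h(x,y)=\tfrac{h(y)}{h(x)}\sum_\gamma\P_x(\gamma)$, the sum running over $S_h$-paths from $x$ to $y$. The one point requiring care — and the main obstacle — is to identify this last sum with $\G_\o(x,y)$, i.e.\ with the $\o$-avoiding paths counted by the killed walk. That $S_h$-paths are $\o$-avoiding is clear; for the converse I would use the structural fact that the zero set $Z=\{v:h(v)=0\}$ satisfies: every $w\in Z\setminus\{\o\}$ has all its neighbors in $Z$, because harmonicity at $w$ gives $\sum_{x\sim w}c_{wx}h(x)=0$ with all summands nonnegative. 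Consequently the only vertex of $Z$ adjacent to $S_h$ is $\o$, so a walk started in $S_h$ that avoids $\o$ can never enter $Z\setminus\{\o\}$ and return; hence for $x,y\in S_h$ the $\o$-avoiding paths from $x$ to $y$ are exactly the $S_h$-paths, giving $\sum_\gamma\P_x(\gamma)=\G_\o(x,y)$ and thus \eqref{eq:XvsY}.

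Transience is then immediate: by \eqref{eq:XvsY}, $\G^h(x,x)=\G_\o(x,x)$, and in a recurrent network the walk from $x$ reaches $\o$ almost surely, so with positive probability it does so before returning to $x$; hence the number of visits to $x$ before $\tau_\o$ is geometric and $\G_\o(x,x)<\infty$. Since every state satisfies $\G^h(x,x)<\infty$, the $h$-process is transient (no irreducibility assumption needed).

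Finally I would deduce \eqref{eq:Gh} by substituting \eqref{eq:XvsY} into the definition and using \eqref{def:mu}:
\[
  \G^h(\mu_h,v)=\sum_{x\sim\o}c_{\o x}h(x)\,\G_\o(x,v)\frac{h(v)}{h(x)}=h(v)\sum_{x\sim\o}c_{\o x}\G_\o(x,v),
\]
where the factor $h(x)$ cancels cleanly (the terms with $h(x)=0$ vanish, consistent with $\G_\o(x,v)=0$ for $x\in Z\setminus\{\o\}$ by the trapping argument above). It then remains to show $\sum_{x\sim\o}c_{\o x}\G_\o(x,v)=c_v$: the Green density is symmetric by \cref{claim:GreenIsDipole}, so $\G_\o(x,v)=c_v\,\g_\o(x,v)$, and $\sum_{x\sim\o}c_{\o x}\g_\o(x,v)=1$ is precisely \eqref{eq:dipole11} with $y=v$; multiplying by $c_v$ yields $\G^h(\mu_h,v)=h(v)c_v$. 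The genuine content of the claim is thus the path/state-space bookkeeping of the second paragraph, with the remaining steps being formal once \eqref{eq:XvsY} is in hand.
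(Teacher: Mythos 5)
Your proof is correct and follows essentially the same route as the paper: sum the telescoping identity \eqref{eq:hprocessvsSRWSimple} over $\o$-avoiding paths to get \eqref{eq:XvsY}, deduce transience from $\G_\o(x,x)<\infty$, and obtain \eqref{eq:Gh} by substitution together with \eqref{eq:dipole11}. The only difference is that you spell out the identification of $\o$-avoiding paths with $S_h$-paths via the trapping property of the zero set, which the paper compresses into the single clause ``by harmonicity of $h$ off $\o$, any such path is contained in $S_h$.''
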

\begin{proof} Transience will follow    once we show $G^h(x,y)<\infty$. To prove \eqref{eq:XvsY}, we sum \eqref{eq:hprocessvsSRWSimple}  over all paths from $x$ to $y$ that avoid $\o$. Note that by harmonicity of $h$ off $\o$, any such path is contained in $S_h$. Next, \eqref{eq:Gh} follows from \eqref{eq:XvsY} by
$$ \G^h({\mu_h},y) =\sum_v c_{\o v} h(v) \G^h(v,y) = h(y) \sum_v c_{\o v} \G_\o(v,y) = h(y) c_y \, ,$$ using \eqref{eq:dipole11} and the symmetry of $\g_\o(\cdot,\cdot)$ in the last step.
\end{proof}

\begin{claim}\label{lem:hProcessInfty} Let $h$ be a potential for a  recurrent rooted network $(G,c,\o)$ and consider the $h$-process $\{Y_n\}$ started at $v \in S_h$. Then for every $M >0$, 
$$ \P^h_v \big( h(Y_\ell) >M \bigr) \to 1 \quad \text{as} \quad \ell \to \infty \, .$$
\end{claim}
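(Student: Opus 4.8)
The plan is to prove the equivalent statement that $1/h(Y_\ell)\to 0$ in probability under $\P^h_v$, from which the claim follows immediately by Markov's inequality. Indeed, since $\P^h_v\big(h(Y_\ell)\le M\big)=\P^h_v\big(1/h(Y_\ell)\ge 1/M\big)\le M\,\E^h_v\big[1/h(Y_\ell)\big]$, it suffices to show that $\E^h_v\big[1/h(Y_\ell)\big]\to 0$ as $\ell\to\infty$. Note that this expectation is finite for every $\ell$, so there is no integrability issue to worry about.

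The heart of the argument is an exact evaluation of this expectation using the telescoping identity \eqref{eq:hprocessvsSRWSimple}. Expanding $\P^h_v(Y_\ell=y)$ over all $\ell$-step paths from $v$ to $y$ and using that $p^h(x,z)=0$ whenever $h(z)=0$, only paths lying entirely in $S_h$ contribute; along any such path the intermediate values of $h$ cancel in the product, leaving $\P^h_v(Y_\ell=y)=\tfrac{h(y)}{h(v)}\,\P_v\big(X_\ell=y,\ X_0,\dots,X_\ell\in S_h\big)$. Dividing by $h(y)$ and summing over $y\in S_h$, the factors $h(y)$ cancel and I obtain $\E^h_v\big[1/h(Y_\ell)\big]=\tfrac{1}{h(v)}\,\P_v\big(X_0,\dots,X_\ell\in S_h\big)$, the (rescaled) probability that the network random walk started at $v$ stays inside $S_h$ during its first $\ell$ steps.

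To conclude, I would use recurrence together with the defining property $h(\o)=0$, which places $\o$ outside $S_h$. Consequently the event $\{X_0,\dots,X_\ell\in S_h\}$ is contained in $\{\tau_\o>\ell\}$, and as $\ell\to\infty$ these decreasing events shrink to $\{X_j\in S_h\text{ for all }j\}\subseteq\{\tau_\o=\infty\}$, which has probability zero by recurrence. Hence $\P_v\big(X_0,\dots,X_\ell\in S_h\big)\downarrow 0$, so $\E^h_v\big[1/h(Y_\ell)\big]\to 0$, and the proof is complete.

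The step I would be most careful about is the path bookkeeping in the second paragraph: one must verify that $\P^h_v(Y_\ell=y)$ genuinely receives contributions only from paths confined to $S_h$, so that the telescoping cancellation of the intermediate $h$-values is legitimate and the surviving constraint is \emph{exactly} confinement to $S_h$ (rather than merely avoidance of $\o$). This is the only delicate point; notably, I do not expect to need the transience of the $h$-process from \cref{claim:GhMu} nor any martingale convergence theorem, since the computation reduces the whole statement to recurrence of the original walk and the single fact that $\o\notin S_h$.
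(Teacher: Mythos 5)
Your proof is correct and is essentially the paper's argument in light disguise: unwinding your Markov-inequality step gives $M\,\E^h_v\big[1/h(Y_\ell)\big]=\frac{M}{h(v)}\,\P_v\big(X_0,\dots,X_\ell\in S_h\big)\le \frac{M}{h(v)}\,\P_v(\tau_\o\ge \ell)$, which is exactly the bound the paper derives directly from the change of measure \eqref{eq:hprocessvsSRWSimple} before invoking recurrence. The path bookkeeping you flag is indeed fine, since $p^h(x,z)=0$ whenever $h(z)=0$, so the $h$-process is confined to $S_h$ and the telescoping is legitimate.
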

This was also observed in \cite[Lemma 5]{EngelHutch}. 
\begin{proof}
   Recall that $\{X_n\}$ is the network random walk. By \eqref{eq:hprocessvsSRWSimple}, we have 
   $$\P_v^h\big( h(Y_\ell) \le M \bigr)=\E_v\Bigl[\frac{h(X_\ell)}{h(v)} \, {\bf 1}_{\{h(X_\ell) \le M \; \, \text{and} \; \,\tau_\o  \ge \ell\}}  \Bigr] \le \frac{M}{h(v)} \, \P_v( \tau_\o  \ge \ell) \,,$$  
   which tends to $0$ as $\ell \to \infty$ by  recurrence. 
\end{proof}


\section{Proof of main theorem} 
\label{sec:MainProof}

Let $d(\cdot,\cdot)$ denote  graph distance. For any integer $r>0$, we set
$$ B(\o,r) = \{ w: d(\o,w) < r\} \quad \textrm{and} \quad \partial B(\o,r) = \{ w : d(\o,w)=r\} \, ,$$
 Next  we state a formally weaker version of  \cref{conj:main},
where we can choose a different potential for each radius, and show that it implies the theorem. 

\begin{lemma}\label{lem:key} Let $\pot$ be the set of potentials on a recurrent infinite rooted network $(G,c,\o)$. Then
\be\label{eq:keylem} M(r):= \max_{\psi \in \pot} \min_{v \in \partial B(\o,r)} \psi(v) \longrightarrow \infty \qquad \textrm{{\rm as} } r \to \infty \, .\ee
\end{lemma}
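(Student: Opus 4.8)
The plan is to prove \cref{lem:key} via the minimax theorem, converting the ``large at \emph{every} $v\in\partial B(\o,r)$'' requirement (hard to arrange with a single potential) into a family of one-dimensional requirements indexed by probability measures, and then estimating each using Green's densities and recurrence. Write $S_r:=\partial B(\o,r)$, a finite set. Consider the bilinear function $F(\nu,\psi)=\sum_{v\in S_r}\nu(v)\psi(v)$ on the product of the simplex of probability measures on $S_r$ with $\pot$. Since a linear functional on the simplex is minimized at a vertex, $\min_\nu F(\nu,\psi)=\min_{v\in S_r}\psi(v)$, so $M(r)=\max_\psi\min_\nu F$. Both domains are compact and convex ($\pot$ by \cref{claim:compact}; note $F$ is finite and continuous since $\psi(v)$ is bounded over $\pot$ for each fixed $v$), so the minimax theorem gives
\[
M(r)=\min_\nu\ \max_{\psi\in\pot}\ \sum_{v\in S_r}\nu(v)\psi(v).
\]
It therefore suffices to show $\max_{\psi\in\pot}\sum_v\nu(v)\psi(v)\to\infty$ as $r\to\infty$, uniformly over probability measures $\nu$ on $S_r$.

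Second, I would produce good potentials as limits of Green densities. For a vertex $z$, \cref{claim:GreenIsDipole} makes $\g_\o(\cdot,z)$ a dipole, and by symmetry $\sum_{v}\nu(v)\g_\o(v,z)=u_\nu(z)$, where $u_\nu(x):=\sum_{v\in S_r}\nu(v)\g_\o(x,v)$. By \cref{claim:compact}, any subsequential limit of $\g_\o(\cdot,z_n)$ along $z_n\to\infty$ is a potential, and the finiteness of $S_r$ lets the value $\sum_v\nu(v)\g_\o(v,z_n)=u_\nu(z_n)$ pass to the limit. Hence
\[
\max_{\psi\in\pot}\sum_{v\in S_r}\nu(v)\psi(v)\ \ge\ \limsup_{z\to\infty}u_\nu(z),
\]
and the task becomes to bound $\limsup_{z\to\infty}u_\nu(z)$ below by a quantity tending to $\infty$ with $r$, uniformly in $\nu$.

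Finally, I would bring in recurrence through effective resistance. Testing the far sphere against its normalized equilibrium measure $\nu_R^\ast$ and using the standard identity $\sum_{z}\nu_R^\ast(z)\g_\o(x,z)=\Reff(\o\leftrightarrow S_R)\,\P_x(\tau_{S_R}<\tau_\o)$ yields, for every $R>r$,
\[
\max_{z\in S_R}u_\nu(z)\ \ge\ \sum_{z\in S_R}\nu_R^\ast(z)\,u_\nu(z)\ =\ \Reff(\o\leftrightarrow S_R)\,\P_\nu(\tau_{S_R}<\tau_\o),
\]
so $\limsup_{z\to\infty}u_\nu(z)\ge\limsup_{R\to\infty}\Reff(\o\leftrightarrow S_R)\,\P_\nu(\tau_{S_R}<\tau_\o)$. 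Recurrence is equivalent to $\Reff(\o\leftrightarrow S_R)\to\infty$, and since $S_r$ separates $\o$ from $S_R$, every trajectory reaching $S_R$ before $\o$ must first cross $S_r$; the goal is to show this product stays bounded below by something growing in $r$. I expect this last step to be the main obstacle: the delicate point is uniformity over $\nu$, in particular controlling measures $\nu$ concentrated on ``deep'' vertices (e.g.\ tips of long thin filaments), for which $\P_\nu(\tau_{S_R}<\tau_\o)$ can decay quickly. Here local finiteness appears essential, as it forces the effective connection points of such vertices to recede to infinity, so that the diverging resistance factor still wins. Combining the three steps yields $M(r)\to\infty$.
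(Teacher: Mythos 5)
Your first step (Sion's minimax theorem applied to $f(\psi,\eta)=\sum_v\eta(v)\psi(v)$ on $\pot\times\Upsilon_r$) is exactly the paper's opening move, and your second step --- lower-bounding $\max_{\psi\in\pot}\sum_v\nu(v)\psi(v)$ by $\limsup_{z\to\infty}\g_\o(\nu,z)$ via limits of dipoles --- is sound. The genuine gap is your third step, and it is not merely a technical loose end: the specific bound you propose fails. Consider the network consisting of two rays glued at $\o$, ray $A$ with unit conductances and ray $B$ with the $n$-th edge having conductance $2^{-n}$; this is recurrent. Take $\nu=\delta_{B_r}$, the $r$-th vertex of ray $B$. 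The equilibrium measure $\nu_R^\ast$ on $\partial B(\o,R)$ puts mass $\asymp R2^{-R}$ on $B_R$, one has $\Reff(\o\lr \partial B(\o,R))\asymp R$ and $\P_{B_r}(\tau_{\partial B(\o,R)}<\tau_\o)\asymp 2^{r-R}$, so $\Reff(\o\lr\partial B(\o,R))\,\P_\nu(\tau_{\partial B(\o,R)}<\tau_\o)\to 0$ as $R\to\infty$, for every $r$. The reason is structural: $\sum_z\nu_R^\ast(z)u_\nu(z)$ is a weighted average of $u_\nu$ over $\partial B(\o,R)$ with respect to a measure chosen without reference to $\nu$, and when $\nu$ lives in a part of the graph that harmonic measure from $\o$ barely charges, this average misses the (large) maximum of $u_\nu$ entirely. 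So the quantity you want to bound, $\limsup_z u_\nu(z)$, may well be large (in this example it is $\asymp 2^r$, witnessed by $z\to\infty$ along ray $B$), but the equilibrium-measure test you propose cannot detect it, and no appeal to local finiteness will repair this.

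The paper closes exactly this gap by a different mechanism: it takes the \emph{minimizing} measures $\eta_r$ from the minimax identity, extracts a limit potential $h=\lim_n\g_\o(\cdot,\eta_{r_n})$, and then shows directly that $\sum_v\eta_{r_n}(v)h(v)$ is large. The key inputs are (i) the conditioned walks $\P_\o(\,\cdot\mid\tau_v<\tau_\o^+)$ averaged against $\eta_{r_n}$ converge to the Doob $h$-process (\cref{lem:PathsConverge}), (ii) the $h$-process escapes to the region where $h$ is large (\cref{lem:hProcessInfty}), and (iii) a time-reversal/optional-stopping inequality $\P_\o(h(X_\ell)\ge M\mid\tau_v<\tau_\o^+)\le h(v)/M$ (\cref{lem:dipolePotential}). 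Together these force $\sum_v\eta_{r_n}(v)h(v)\ge m$ for any $m$ once $n$ is large. In effect, the paper replaces your fixed test object $\nu_R^\ast$ by a test potential manufactured from the adversarial measures themselves, which is precisely what is needed to handle the ``deep'' or ``thin'' parts of the graph that you correctly identified as the obstacle. If you want to salvage your outline, this is the missing idea you would need to supply.
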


\begin{remark} \label{remark:MR} 
For every integer $r>0$ and $\psi\in\pot$, the minimum of $\psi$ on $V \setminus B(\o,r)$ is attained on $\partial B(\o,r)$. 
Indeed, let $\{X_t\}$ be the network random walk started at $w \not \in B(\o,r)$ and let $\tau_r$ be the hitting time of $\partial B(\o,r)$. Then $\psi(X_{t\wedge \tau_r})$ is a positive martingale, so by recurrence and optional stopping,
$$ \psi(w) \geq \E_w [ \psi(X_{\tau_r})] \geq  \min_{v \in \partial B(\o,r)} \psi(v)\, .$$
Consequently, $\displaystyle M(r)=\max_{\psi \in \pot} \min_{w \in V\setminus B(\o,r)} \psi(w)$ is non-decreasing in $r$.   
\end{remark}


\medskip

\begin{proof}[{\bf Proof of \cref{conj:main} given \cref{lem:key}}] For each integer $r$, let $\psi_r$ be a potential where the maximum in the definition \eqref{eq:keylem} of $M(r)$ is attained. 
For every $n\geq 1$, choose $r_n$ such that $M(r_n)\geq n2^n$. By \cref{claim:compact},
$$ h := \sum_{n=1}^\infty 2^{-n} \psi_{r_n}$$
is a potential. Next, if $w\notin B(\o,r_n)$, then $h(w) \geq 2^{-n} \psi_{r_n}(w) \geq n$. Thus, for each $n$, the set $\{v \in V : h(v) < n\}$ is finite. 
\end{proof}

\subsection{Preliminary lemmas} The following lemma shows that if the dipoles $\g_\o(\cdot,v_n)$ converge to a potential $h$, then the network random walks conditioned on $\tau_{v_n} < \tau_\o$ converge weakly to the $h$-process. We actually need this for dipole mixtures. Given a probability measure $\eta$ on $V$, write $$ \g_\o(\cdot, \eta) := \sum_v \eta(v) \g_\o(\cdot, v) \, .$$ 

\begin{lemma} \label{lem:PathsConverge} Let $(G,c,\o)$ be a recurrent rooted network. Let $r_n\to \infty$ be sequence of integers and for each $n$ let $\eta_{r_n}$ be a probability measure supported on $\partial B(\o,r_n)$. Assume that the sequence $\g_\o(\cdot,\eta_{r_n})$ converges pointwise to a potential $h$ as $n\to \infty$. Then for any finite path $\gamma$ in $S_h$ starting at a neighbor of $\o$, we have
\be\label{eq:pathsConverge} \sum_{v} \eta_{r_n}(v) \P_\o( \o \gamma \mid \tau_{v} < \tau_\o^+) \longrightarrow \P_{\mu_h}^h(\gamma) \quad \text{as}    \quad n \to \infty \,.\ee

\end{lemma}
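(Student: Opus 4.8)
The plan is to reduce the left-hand side of \eqref{eq:pathsConverge} to an \emph{exact} closed-form expression for each fixed $n$, and then pass to the limit using only the pointwise convergence hypothesis $\g_\o(\cdot,\eta_{r_n})\to h$. Fix the finite path $\gamma=(\gamma_1,\dots,\gamma_k)$. Since $\gamma\subseteq S_h$ and $h(\o)=0$, the path avoids $\o$; moreover, for every $n$ large enough that $r_n>\max_i d(\o,\gamma_i)$, each $v\in\partial B(\o,r_n)$ lies off $\gamma$. I would restrict to such $n$ throughout, which is harmless for the limit.

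First I would compute the conditional probability for a single $v$, writing $\P_\o(\o\gamma\mid \tau_v<\tau_\o^+)=\P_\o(\o\gamma,\tau_v<\tau_\o^+)/\P_\o(\tau_v<\tau_\o^+)$. For the numerator, the first $k$ steps $\o\gamma$ meet neither $\o$ (after time $0$) nor $v$, so both $\tau_v$ and $\tau_\o^+$ exceed $k$ on this event, and the Markov property at time $k$ yields $\P_\o(\o\gamma,\tau_v<\tau_\o^+)=\P_\o(\o\gamma)\,\P_{\gamma_k}(\tau_v<\tau_\o)$. I would then invoke the standard renewal identity for the walk killed at $\o$, namely $\P_x(\tau_v<\tau_\o)=\G_\o(x,v)/\G_\o(v,v)$ for $x\ne v$ (the expected number of visits to $v$ before $\tau_\o$ equals the probability of ever reaching $v$ times $\G_\o(v,v)$, and $\G_\o(v,v)\in[1,\infty)$ by recurrence).

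For the denominator, conditioning on the first step gives $\P_\o(\tau_v<\tau_\o^+)=\G_\o(v,v)^{-1}\sum_{x\sim\o}p(\o,x)\G_\o(x,v)$, and since $\sum_{x\sim\o}c_{\o x}\G_\o(x,v)=c_v\sum_{x\sim\o}c_{\o x}\g_\o(x,v)=c_v$ by \eqref{eq:dipole11}, this equals $c_v/(c_\o\G_\o(v,v))$. The factors $\G_\o(v,v)$ cancel in the ratio, leaving the clean form
$$\P_\o(\o\gamma\mid\tau_v<\tau_\o^+)=c_\o\,\P_\o(\o\gamma)\,\g_\o(\gamma_k,v),$$
where I have used $\g_\o(\gamma_k,v)=\G_\o(\gamma_k,v)/c_v$. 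Summing against $\eta_{r_n}$ and using linearity of $\g_\o(\gamma_k,\cdot)$ in the measure, the left-hand side of \eqref{eq:pathsConverge} equals \emph{exactly} $c_\o\,\P_\o(\o\gamma)\,\g_\o(\gamma_k,\eta_{r_n})$ for every large $n$. The hypothesis $\g_\o(\cdot,\eta_{r_n})\to h$ then gives $\g_\o(\gamma_k,\eta_{r_n})\to h(\gamma_k)$, so the limit is $c_\o\,\P_\o(\o\gamma)\,h(\gamma_k)$, which is precisely $\P^h_{\mu_h}(\gamma)$ by \eqref{eq:hprocessvsSRW}.

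The main obstacle is organizational rather than deep: the crux is recognizing that the single-$v$ conditional probability collapses to the Green density $\g_\o(\gamma_k,v)$, after which the sum over $v$ assembles into $\g_\o(\gamma_k,\eta_{r_n})$ and the convergence is immediate, with \emph{no} interchange-of-limit issue to resolve. The one genuine point requiring care is the Markov factorization of the numerator, which needs $v$ to lie off $\gamma$; this is exactly why I first pass to $n$ large enough that $\partial B(\o,r_n)$ is disjoint from the fixed finite path $\gamma$. A secondary bookkeeping point is distinguishing the return time $\tau_\o^+$ used at the root from the hitting time $\tau_\o$ used for the walk restarted at $\gamma_k$.
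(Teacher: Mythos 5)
Your proof is correct and follows essentially the same route as the paper: both reduce the single-$v$ conditional probability to the exact identity $\P_\o(\o\gamma\mid\tau_v<\tau_\o^+)=c_\o\,\P_\o(\o\gamma)\,\g_\o(\gamma_k,v)$, integrate against $\eta_{r_n}$, and pass to the limit via \eqref{eq:hprocessvsSRW}. The only cosmetic difference is that the paper evaluates $\P_\o(\tau_v<\tau_\o^+)=(c_\o\,\g_\o(v,v))^{-1}$ by path reversal, whereas you obtain the same value by first-step conditioning together with \eqref{eq:dipole11}.
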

\begin{proof}
First note that for $v,w$ different from $\o$, we have
$\g_\o(w,v)=\P_w(\tau_v <\tau_\o)\g_\o(v,v)$ and  by path reversal,
$$ c_\o\P_\o(\tau_v <\tau_\o^+)= 
c_v \P_v(\tau_\o <\tau_v^+)= c_v \G_\o(v,v)^{-1} =\g_\o(v,v)^{-1}\,.$$
 Therefore, if $\gamma=(\gamma_1,\dots,\gamma_\ell)$   does not contain the vertex $v$, then  
 \be \label{dipoledoob}
\P_\o( \o \gamma \mid \tau_{v} < \tau_\o^+)=\P_\o( \o \gamma) \frac{ \P_{\gamma_\ell}(   \tau_{v} < \tau_\o) }{\P_{\o}(   \tau_{v} < \tau_\o^+)}=
c_\o \P_\o( \o \gamma)  \g_\o(\gamma_\ell,v) \,.
 \ee
Integrating this with respect $\eta_{r_n}$ gives
$$ \sum_v \eta_{r_n}(v)\P_\o( \o \gamma \mid \tau_{v} < \tau_\o^+) = c_\o \P_\o( \o \gamma)  \g_\o(\gamma_\ell,\eta_{r_n}) \to 
c_\o \P_\o( \o \gamma)  h(\gamma_\ell) \, ,$$
as $n \to \infty$ by our hypothesis. This yields the desired result by  \eqref{eq:hprocessvsSRW}. 
\end{proof}

Let $h$ be a potential and set $T_M = \inf \{t > 0: h(X_t) \geq M\}$ where 
$\{X_t\}$ is the network random walk.  Optional stopping for the nonnegative martingale $h(X_{t\wedge \tau_\o})$ gives $\P_v(T_M < \tau_\o)\leq h(v)/M$. Careful path reversal and conditioning yields the following.

\begin{lemma}\label{lem:dipolePotential}
Let $h$ be a potential for a  recurrent rooted network $(G,c,\o)$ and let $v\neq \o$ be a vertex. Then the network random walk $\{X_t\}$ satisfies 
$$ \P_\o( h(X_\ell) \geq M \mid \tau_v < \tau_\o^+) \leq {h(v) \over M} \, ,$$
for all $M>0$ and integer $0<\ell< d(\o,v)$, where $\tau_\o^+=\min \{ t \geq 1 : X_t = \o \}$.
\end{lemma}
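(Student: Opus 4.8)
The plan is to reduce the conditioned estimate to the unconditioned optional-stopping bound $\P_v(T_M < \tau_\o) \le h(v)/M$ already recorded just before the lemma, by exhibiting the conditioned walk from $\o$, seen at the fixed time $\ell$, as a time-reversal of the walk from $v$ seen $\ell$ steps before its absorption at $\o$. The crucial structural input is that $\ell < d(\o,v)$: this guarantees that on the event $\{\tau_v<\tau_\o^+\}$ the walk has neither reached $v$ nor returned to $\o$ by time $\ell$, so that a single application of the Markov property at time $\ell$ cleanly splits the path.

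First I would write $\P_\o(h(X_\ell)\ge M \mid \tau_v<\tau_\o^+)$ as the ratio of $\P_\o(h(X_\ell)\ge M,\,\tau_v<\tau_\o^+)$ to $\P_\o(\tau_v<\tau_\o^+)$ and expand the numerator by conditioning at time $\ell$: since $\tau_v>\ell$ and $\tau_\o^+>\ell$ on this event, it equals $\sum_{w:h(w)\ge M}\P_\o(X_\ell=w,\tau_\o^+>\ell)\,\P_w(\tau_v<\tau_\o)$. Using the two identities established in the proof of \cref{lem:PathsConverge}, namely $\P_w(\tau_v<\tau_\o)=\g_\o(w,v)/\g_\o(v,v)$ and $c_\o\P_\o(\tau_v<\tau_\o^+)=\g_\o(v,v)^{-1}$, the factors of $\g_\o(v,v)$ cancel and I obtain $\P_\o(h(X_\ell)\ge M\mid\tau_v<\tau_\o^+)=c_\o\sum_{w:h(w)\ge M}\P_\o(X_\ell=w,\tau_\o^+>\ell)\,\g_\o(w,v)$.

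The heart of the argument is the reversal step. By reversibility of the network walk, summing over length-$\ell$ paths from $\o$ to $w$ that avoid $\o$ at all intermediate times gives $c_\o\P_\o(X_\ell=w,\tau_\o^+>\ell)=c_w\P_w(\tau_\o=\ell)$, while $c_w\g_\o(w,v)=\G_\o(v,w)$ by symmetry of the Green density. Substituting and expanding $\G_\o(v,w)=\sum_{k\ge0}\P_v(X_k=w,\tau_\o>k)$, the Markov property recombines the two factors into $\P_v(X_k=w,\tau_\o=k+\ell)$; summing over $w$ with $h(w)\ge M$ and over $k$ yields the exact identity
$$\P_\o(h(X_\ell)\ge M\mid\tau_v<\tau_\o^+)=\P_v\bigl(h(X_{\tau_\o-\ell})\ge M\bigr),$$
where $\tau_\o-\ell\ge1$ because $\tau_\o\ge d(\o,v)>\ell$ under $\P_v$.

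Finally, if $h(X_{\tau_\o-\ell})\ge M$ then the walk from $v$ has attained level $M$ at the positive time $\tau_\o-\ell<\tau_\o$, so $T_M<\tau_\o$; hence the right-hand side is at most $\P_v(T_M<\tau_\o)\le h(v)/M$, which is exactly the claimed bound. I expect the main obstacle to be the bookkeeping in the reversal step: verifying that the forward time $\ell$ becomes the backward time $\tau_\o-\ell$, checking that the hypothesis $\ell<d(\o,v)$ is precisely what forces $\tau_\o-\ell$ to lie strictly between $0$ and $\tau_\o$, and keeping the reversibility constants $c_\o,c_w$ straight throughout.
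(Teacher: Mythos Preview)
Your argument is correct, and it reaches the same endpoint $\P_v(T_M<\tau_\o)\le h(v)/M$ as the paper, but by a genuinely different route. The paper first uses $\ell<d(\o,v)$ to bound the event $\{h(X_\ell)\ge M\}$ by $\{T_M<\tau_v\}$, then reverses the \emph{entire} excursion from $\o$ back to $\o$ through $v$ to obtain the identity $\P_\o(T_M<\tau_v\mid\tau_v<\tau_\o^+)=\P_v(T_M<\tau_\o\mid\tau_\o<\tau_v^+)$, and finally removes the conditioning by a last-exit decomposition at $v$. You instead split at the fixed time $\ell$, reverse only the first $\ell$ steps, and use the Green density to reassemble the pieces into the walk from $v$; this produces the exact identity $\P_\o(h(X_\ell)\ge M\mid\tau_v<\tau_\o^+)=\P_v(h(X_{\tau_\o-\ell})\ge M)$, from which the bound follows immediately. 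Your approach buys a sharper intermediate statement (an equality rather than a chain of inequalities) and avoids the last-exit step, at the cost of slightly heavier bookkeeping with Green functions; the paper's version is more symmetric and makes the role of path reversal more transparent. Both uses of the hypothesis $\ell<d(\o,v)$ are the same in spirit: it guarantees that the relevant time index stays strictly between $0$ and the hitting time of the far endpoint.
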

\begin{proof} We may assume that $h(v)< M$. Since $\ell< d(\o,v)$ we have
\be\label{eq:beforeReversing}  \P_\o(h(X_\ell) \geq M \mid \tau_{v} < \tau_\o^+) \leq  \P_\o(T_{M} < \tau_{v} \mid \tau_{v} < \tau_\o^+) \, .\ee
Recall that $c_\o\P_\o(\tau_{v} < \tau_\o^+) = {c_{v}}\P_{v}(\tau_\o < \tau_{v}^+)$ by path reversal. Similarly, 
\begin{eqnarray}\label{eq:onemore} c_\o \P_\o(T_{M} < \tau_{v} < \tau_\o^+) &=& c_v \P_{v}(T_{M} < \tau_{\o} < \tau_{v}^+) \, , \qquad \text{so} \nonumber \\
\P_\o(T_{M} < \tau_{v} \mid \tau_{v} < \tau_\o^+) &=& \P_{v}(T_{M} < \tau_{\o} \mid \tau_{\o} < \tau_{v}^+) \, .\end{eqnarray}
Consider the random walk started at $v$ and write $L = \max\{t < \tau_\o : X_t = v\}$. Then $X_{L},\ldots, X_{\tau_\o}$ is distributed as a random walk started at $v$, conditioned on $\tau_\o < \tau_{v}^+$, and stopped at $\o$. Hence
\be\label{eq:DPstep2} \P_{v}(T_M < \tau_{\o} \mid \tau_{\o} < \tau_{v}^+) = \P_{v}(\exists s\in (L,\tau_\o) : h(X_s)\geq M ) \leq \P_{v}(T_M < \tau_o) \, .\ee
To bound the last probability we note that $S_t=h(X_{t \wedge \tau_o})$ is a non-negative martingale, so by optional stopping
$$ h(v) =  \E_v S_0 \geq \E_v S_{T_M} \geq M \P_{v}(T_M < \tau_\o) \, .$$ 
Combining this with \eqref{eq:beforeReversing}, \eqref{eq:onemore} and \eqref{eq:DPstep2} concludes the proof. \end{proof}

\subsection{Proof of \cref{lem:key}}
Recall that $\pot$ denotes the set of potentials on $(G,c,\o)$. Given an integer $r>0$, let $\Upsilon_r$ be the compact space of probability measures on $\partial B(\o,r)$. Observe that for every  $\psi:V\to \R$
\be\label{eq:trivial} \min_{v \in \partial B(\o,r)} \psi(v) =\min_{\eta \in \Upsilon_r} \sum_v \eta(v) \psi(v) \, .\ee
Next, we will apply  Sion's minimax theorem \cite{Sion58} to the bilinear continuous function $f_r:\pot\times \Upsilon_r\to \RR$, defined by 
$$ f_r(\psi, \eta) = \sum _{v \in \partial B(\o,r)} \eta(v)\psi(v) \, .$$
We obtain, using \eqref{eq:trivial}, that $M(r)$ defined in \eqref{eq:keylem} satisfies
\be\label{eq:minmax}M(r) = \max_{\psi\in \pot} \min_{\eta \in \Upsilon_r} f_r(\psi, \eta)=  \min_{\eta \in \Upsilon_r}\max_{\psi\in \pot}f_r(\psi, \eta) \, .\ee
For each integer $r>0$, let $\eta_r \in \Upsilon_r$ denote a minimizer of the right-hand side of \eqref{eq:minmax}. By \cref{claim:GreenIsDipole}, there exists a sequence $r_n\to \infty$ such that $\g_\o(\cdot,\eta_{r_n})$ converges pointwise to a function $h:V\to[0,\infty)$. Clearly $h$ is a potential. 

Fix $m>0$. By \cref{lem:hProcessInfty}, there exists an integer $\ell$ such that 
$ \P_{\mu_h}^h(h(Y_\ell) \geq 2m) \geq 1/2 \, .$ 
We now apply \cref{lem:PathsConverge} and sum \eqref{eq:pathsConverge} over all paths $\gamma=(\gamma_1,\ldots,\gamma_\ell)\subset S_h$ such that $h(\gamma_\ell)\geq 2m$. We infer that  
$$ \lim_{n\to \infty} \sum_v \eta_{r_n}(v) \P_\o(h(X_\ell) \geq 2m \mid \tau_v 
< \tau_\o^+)=\P_{\mu_h}^h(h(Y_\ell) \geq 2m) \geq  1/2 \, .$$
\cref{lem:dipolePotential} states that if $\ell < r_n$ and $v\in \partial B(\o,r_n)$, then
$$ \P_\o(h(X_\ell) \geq 2m \mid \tau_v 
< \tau_\o^+) \leq {h(v) \over 2m} \, .$$
Combining the last two inequalities yields that
$$ \liminf_{n \to \infty} f_{r_n}(h, \eta_{r_n}) = \liminf_{n \to \infty} \sum_v \eta_{r_n}(v) {h(v)}  \geq m \, .$$
 In conjunction with \eqref{eq:minmax}, this gives that $\liminf_n M(r_n) \geq  m$  for every $m>0$. Since     $M(r)$ is non-decreasing (\cref{remark:MR}), it must tend to infinity as $r \to \infty$. \qed

\section*{Acknowledgements} This work was supported by ERC consolidator grant 101001124 (UniversalMap) as well as ISF grants 1294/19 and 898/23. We thank Omer Angel, Ori Gurel-Gurevich and Fedja Nazarov for helpful discussions. 

\printbibliography

\end{document}